\documentclass{amsart}

\usepackage{mathtools, amssymb, amsthm} 
\usepackage{bm}  
\usepackage{kotex}  

\usepackage{tikz}
\usetikzlibrary{angles, arrows.meta, calc, positioning, quotes}
\usepackage{hyperref}
\usepackage{float}  

\usepackage{xcolor}

\newcommand{\vcz}{\bm{\mathrm{0}}}
\newcommand{\vc}[1]{\bm{\mathrm{#1}}}
\newcommand{\dd}{\mathrm{d}}

\newtheorem*{lemma}{Lemma}

\tikzset{
  vechead/.style={-{Stealth[scale=0.8]}}
}

\begin{document}

\title[A Simple Geometric Proof of Kepler’s First Law]{A simple Geometric Proof of Kepler’s First Law via Velocity Projection Ratio}
\author{Hyounggyu Choi}
\address{SNU College, Seoul National University, Seoul, Republic of Korea}
\email{hgchoi66@snu.ac.kr}
\date{\today}

\begin{abstract}
We present a simple geometric proof of Kepler’s first law. The core of the argument rests on a kinematic invariant that we call the velocity projection ratio—the ratio between the radial projection of the velocity and its projection onto a fixed direction parallel to the periapsis line. We show that the constancy of this ratio is an immediate consequence of Newton’s laws, and that it leads directly to the focus–directrix property of the orbit. By establishing a direct link between planetary dynamics and the classical geometric definition of conics, our proof reveals the conic nature of the orbit as an inevitable consequence of the inverse-square law.
\end{abstract}
\maketitle

\section{Introduction}

The fact that a particle moving under an inverse-square central force traces a conic section has been proved in several classical ways since Isaac Newton. 
Classical derivations of Kepler’s First Law largely fall into several well-established traditions.

First is Newton’s original geometric argument in the \emph{Principia}, where the inverse-square force 
is combined with polygonal approximations of orbital motion to show that the orbit must be a conic. 
Although historically profound, this method is technically demanding and relies on a number of intricate geometric lemmas \cite{NewtonPrincipia}.

A second family of proofs uses the modern analytic approach, which involves solving a second-order differential equation for the orbit. 
This approach appears in standard mechanics texts such as Goldstein, Poole, and Safko, and also in Landau–Lifshitz, yielding the familiar conic expression 
$\displaystyle r(\theta) = \frac{p}{ 1 + \epsilon \cos\theta}$. 
While systematic, it requires solving differential equations and relating several orbital parameters \cite{GoldsteinCM, LandauLifshitz, Whittaker}.

A third classical route uses the hodograph method introduced by Hamilton. 
The hodograph—the curve traced by the tip of the velocity vector in velocity space—is a circle for Keplerian motion. 
This method provides deep geometric insight into the velocity structure but requires an additional step to transform the velocity-space circle back into the position-space conic \cite{HamiltonHodograph}.

A fourth approach utilizes the Laplace–Runge–Lenz (LRL) vector. 
The conservation of this vector implies that the motion lies on a conic, with the eccentricity encoded in the vector's magnitude. 
This method is elegant but relies on specific vector identities and the unique algebraic symmetry of the inverse-square law \cite{Runge1919, Lenz1924, GoldsteinLRL}.

A fifth, more recent tradition seeks to bridge the gap between Newton’s geometric intuition and modern vector algebra without solving differential equations. 
Among the various efforts to derive Kepler's laws without differential equations, the approach by Provost and Bracco \cite{Provost2009} stands out for its elegant geometric insight. By demonstrating that the velocity projection onto the transverse direction $\hat{\theta}$ is proportional to the inverse of the radial distance, they reconstructed the orbit as a conic. 
This approach is highly instructive and provides a brilliant pedagogical bridge between Newtonian dynamics and orbital geometry. Their proof is revisited in the appendix of this paper.

While we acknowledge the elegance of the aforementioned traditions, our approach offers a more direct and self-contained path. 
The key idea originates from classical pursuit problems, such as the \emph{$n$-bug problem}, where each bug continuously chases the next. In such problems, the rate at which the distance between the chaser and the target closes is determined solely by the projections of the chaser's velocity and target's velocity onto the line connecting them. Inspired by this, we consider two dynamically meaningful projections of the planet’s velocity: one onto the radial direction and one onto a fixed direction parallel to the periapsis line. Remarkably, the ratio of these two projections—the \textit{velocity projection ratio}—remains constant throughout the motion, satisfying the focus-directrix property of a conic and directly revealing the orbit as a conic.
The focus-directrix property means that, for every point on the conic, the distance $r$ to its focus is equal to $\epsilon$ times its distance $d$ to the directrix:
\[
r = \epsilon d.
\]
Here, $\epsilon$ is the eccentricity of the conic.
This approach not only yields a concise proof but also exposes a direct structural connection between Newtonian dynamics and the classical geometry of conics.

\section{Proof of Kepler's First Law}

\begin{figure}[h!]
\begin{center}
\begin{tikzpicture}[scale=3]
  \def \rightanglesymsize {0.1} 

  \def \gm {0.5}  
  \def \e {0.75}  
  \def \rz {1}   
  
  \def \vz {sqrt((1 + \e) * \gm / \rz)}  
  \def \l {(1 + \e) * \rz}  

  \coordinate (O) at (0, 0);
  \filldraw[black] (O) circle (0.5pt) node[anchor=north] {$\vcz$};

  \draw[very thin,vechead] (-0.5, 0) -- ({1.2 * (1 + 1/\e) * \rz}, 0) coordinate (G);

  \filldraw[black] (\rz, 0) circle (0.5pt) node[anchor=south west] {periapsis};
  \draw[thin] (0, 0) to [out=335,in=205,distance=0.3cm] 
    node[anchor=midway,below] {$r_0$} (\rz, 0);

  \draw[thick,->] plot [variable=\t,domain=-20:80,smooth,samples=20] 
    ({\l / (1 + \e * cos(\t)) * cos(\t)}, {\l / (1 + \e * cos(\t)) * sin(\t)}) node[anchor=east] {orbit};

  \foreach \phi in {pi/5} {
      \coordinate (p) at ({\l / (1 + \e * cos(\phi r)) * cos(\phi r)}, {\l / (1 + \e * cos(\phi r)) * sin(\phi r)});

      \draw[thick] ({(1 + 1/\e) * \rz}, -0.3) -- ({(1 + 1/\e) * \rz}, 0) coordinate (I) -- ({(1 + 1/\e) * \rz}, 1.43) coordinate (H) node[anchor=south] {directrix};
      \draw[thin] (\rz, 0) .. controls +(335:0.3) and +(205:0.3) ..
        node[anchor=midway,below] {$\displaystyle\frac{r_0}{\epsilon}$} ({(1 + 1 / \e) * \rz}, 0);
    
      \draw[very thin] (0, 0) -- ($1.7*(p)$) coordinate (C);

      \draw[very thin] ($(p)+(-1.2,0)$) coordinate (F) -- ({(1 + 1/\e) * \rz}, {\l / (1 + \e * cos(\phi r)) * sin(\phi r)}) coordinate (J);
      
      \coordinate (v) at ({(\gm / (\rz * \vz) * (-sin(\phi r))) }, {(\gm / (\rz * \vz) * (\e + cos(\phi r))) });
      \draw[red,thick,vechead] (p) -- ++(v) node[anchor=south,xshift=0,yshift=0] {$\vc{v}(\theta)$};
    
      \def \radialcomponent {((\gm / (\rz * \vz) * (-sin(\phi r))) * cos(\phi r) + (\gm / (\rz * \vz) * (\e + cos(\phi r))) * sin(\phi r))}
      \draw[blue,thick,vechead] (p) -- node[black,anchor=midway,left,xshift=5,yshift=3] {$\text{\small ($\epsilon$)}$} ++({\radialcomponent * cos(\phi r)}, {\radialcomponent * sin(\phi r)}) 
        node[anchor=west,yshift=-3] {$\{\vc{v}(\theta) \cdot \vc{u}(\theta)\}\,\vc{u}(\theta)$};
    
      \def \horizontalcomponent {((\gm / (\rz * \vz) * (sin(\phi r)))}
      \draw[blue,thick,vechead] (p) -- node[black,anchor=midway,above,yshift=-2] {$\text{\small ($1$)}$} ++({-\horizontalcomponent}, 0) node[anchor=north east] {$\{\vc{v}(\theta) \cdot \vc{u}(\pi)\}\,\vc{u}(\pi)$};
    
      \draw[thick,dotted] ($(p)+(v)$) coordinate (A) -- ($(p)+({\radialcomponent * cos(\phi r)}, {\radialcomponent * sin(\phi r) })$) coordinate (B);
      \draw[thick,dotted] ($(p)+(v)$) coordinate (D) -- ($(p)+({-\horizontalcomponent}, 0)$) coordinate (E);
      
      \draw pic [draw,angle radius=2mm] {right angle = A--B--C};
      \draw pic [draw,angle radius=2mm] {right angle = D--E--F};
      \draw pic [draw, angle radius=2mm] {right angle = H--I--O};
      \draw pic [draw, angle radius=2mm] {right angle = H--J--p};

      \draw pic [draw,angle radius=4mm] {angle = G--O--C};
      \draw (O) node[anchor=south west,xshift=10,yshift=-2] {$\theta$};
  }
\end{tikzpicture}

\caption{Kepler's first law: The proof is completely encoded in the fact that the ratio
$\displaystyle\frac{\vc{v}(\theta)\cdot\vc{u}(\theta)}{\vc{v}(\theta)\cdot\vc{u}(\pi)}=\epsilon$
is independent of $\theta$.}
\end{center}
\end{figure}

We place the mass $M$ at the origin and denote the Newtonian gravitational constant by $G$.
The planetary trajectory is written in polar form as
\[
\vc{r}(\theta) = r(\theta)(\cos\theta,\sin\theta),
\]
or simply
\[
\vc{r}= r(\cos\theta,\sin\theta),
\]
where the polar angle $\theta$ is a function of time $t$.
Let $\vc{v}=\vc{v}(\theta)$ denote the velocity at the point $\vc{r}(\theta)$.

From the \emph{conservation of angular momentum}, the specific angular momentum
\[
h = |\vc r \times \vc v|
\]
is constant. We do not consider the case $h=0$.
Hence,
\[
\frac12 r^{2}\,\dd\theta
   = \frac12 h \,\dd t ,
\]
which yields
\begin{equation}\label{eq:areal_temp}
\frac{\dd\theta}{\dd t} = \frac{h}{r^2},~
\text{ or }~~\frac{\dd t}{\dd\theta} = \frac{r^2}{h}.
\end{equation}
Since $\dd\theta/\dd t$ never vanishes, the polar angle $\theta$ is a strictly monotone function of time.
Without loss of generality, we have assumed that $\theta$ is strictly increasing.

\emph{Newton's law of gravitation} gives
\[
\frac{\dd \vc{v}}{\dd t} = -\frac{GM}{r^2}\vc{u}(\theta),
\quad \text{where} \quad
\vc{u}(\theta) = (\cos\theta,\sin\theta).
\]
Using~\eqref{eq:areal_temp}, we obtain
\[
\frac{\dd\vc{v}}{\dd\theta}
= \frac{\dd\vc{v}}{\dd t}\frac{\dd t}{\dd\theta}
= -\frac{GM}{r^2}\vc{u}(\theta)\frac{r^2}{h}
= -\frac{GM}{h}\vc{u}(\theta).
\]
Integrating with respect to $\theta$, we find
\[
\begin{aligned}
\vc{v}(\theta)
&= \vc{v}(0) + \int_0^\theta - \frac{GM}{h}\vc{u}(u)\,\dd u \\
&= \vc{v}(0) - \frac{GM}{h}\int_0^\theta (\cos u,\sin u)\,\dd u \\
&= \vc{v}(0) - \frac{GM}{h}(\sin\theta,1-\cos\theta) \\
&= \vc{v}(0) + \frac{GM}{h}\bigl\{(0,-1)+\vc{u}(\theta+\pi/2)\bigr\}.
\end{aligned}
\]
Thus we obtain an \emph{explicit expression for the velocity vector} as a function of the polar angle:
\begin{equation}\label{eq:v-theta_temp}
\vc{v}(\theta)
= \vc{v}(0) + \frac{GM}{h} \bigl\{(0,-1)+\vc{u}(\theta+\pi/2)\bigr\}.
\end{equation}
We note that the above expression is $2\pi$-periodic in $\theta$.

As a preliminary step, we establish the existence of a periapsis without assuming any \emph{a priori} geometric form of the orbit. At the periapsis, the position and velocity vectors are inherently orthogonal; securing its existence is therefore a critical prerequisite that justifies our subsequent kinematic formulation.

\begin{lemma}
The orbit admits a periapsis.

\end{lemma}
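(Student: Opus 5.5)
The plan is to turn the velocity formula \eqref{eq:v-theta_temp} into an explicit formula for $r$ as a function of $\theta$, and then locate the minimum of $r$ directly. Write $\vc{u}(\theta+\pi/2)=(-\sin\theta,\cos\theta)$ for the transverse unit vector. Since $h=|\vc r\times\vc v|=r\,\bigl(\vc v(\theta)\cdot\vc u(\theta+\pi/2)\bigr)$, with $\theta$ increasing so that this transverse component is positive, we get
\[
r(\theta)=\frac{h}{w(\theta)},\qquad w(\theta):=\vc v(\theta)\cdot\vc u(\theta+\pi/2).
\]
Substituting \eqref{eq:v-theta_temp}, and using $(0,-1)\cdot\vc u(\theta+\pi/2)=-\cos\theta$ and $\vc u(\theta+\pi/2)\cdot\vc u(\theta+\pi/2)=1$, I expect
\[
w(\theta)=\frac{GM}{h}-\frac{GM}{h}\cos\theta+\vc v(0)\cdot(-\sin\theta,\cos\theta).
\]
This can be rewritten as $w(\theta)=\frac{GM}{h}+C\cos(\theta-\varphi)$ for some constants $C\ge 0$ and $\varphi$. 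This is a routine trigonometric combination of the $\cos\theta$ and $\sin\theta$ terms.

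Next I identify the range of $\theta$ swept by the motion. Let $I$ be the maximal interval of angles reached, starting from $\theta=0$. On $I$ we must have $w>0$, since $r$ is finite and positive there.
\begin{itemize}
\item If $C<GM/h$, then $w>0$ everywhere, $r$ is bounded, the solution exists for all time, and by \eqref{eq:areal_temp} $\theta$ increases without bound. In that case $I=\mathbb R$.
\item If $C\ge GM/h$, the set $\{w>0\}$ is a union of open arcs centred at $\varphi+2\pi k$. Then $I$ is the arc containing $0$, since the motion continues until $r\to\infty$, which forces $w\to0$.
\end{itemize}
In either case $I$ contains some $\theta^*=\varphi+2\pi k$.

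At $\theta^*$, $w$ attains its maximum $GM/h+C$, so $r$ attains its minimum over the whole orbit. Moreover $\dd r/\dd\theta=-h\,w'(\theta^*)/w(\theta^*)^2=0$, and hence the radial velocity $\dd r/\dd t=(\dd r/\dd\theta)(\dd\theta/\dd t)$ vanishes there. Thus $\vc v\perp\vc r$ at $\theta^*$, which is the periapsis. The degenerate case $C=0$ is a circular orbit, on which every point qualifies as a periapsis.

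The main obstacle is the second step: justifying that the actual motion sweeps the whole arc on which $w>0$, rather than stopping short of $\theta^*$. This is a global-existence issue. I would argue from the ODE as follows. A solution of the inverse-square problem with $h\neq0$ can only cease to exist in finite time if $r\to0$ or $r\to\infty$.
\begin{itemize}
\item The case $r\to0$ is excluded because $w\le GM/h+C$ gives $r\ge h/(GM/h+C)>0$.
\item In the case $r\to\infty$, $w\to0$, so $\theta$ tends to an endpoint of the arc. If instead the time of existence is infinite, monotonicity and $\dd\theta/\dd t=h/r^2$ together with the bound on $r$ inside compact subarcs show that $\theta$ cannot stall at an interior point of the arc.
\end{itemize}
Both cases then force $I$ to be the full arc.
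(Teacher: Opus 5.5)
Your argument is correct, but it takes a genuinely different route from the paper. The paper never writes $r$ as a function of $\theta$. It uses \eqref{eq:v-theta_temp} only to see that $|\vc v(\theta)|$ is bounded and $2\pi$-periodic, and uses energy conservation to convert ``$|\vc v|$ maximal'' into ``$r$ minimal''. It then splits on the range $I_\theta$ of the polar angle. If $I_\theta$ is unbounded, it contains a full period, on which $|\vc v|$ attains its global maximum. If $I_\theta=(a,b)$, then $\vc v\to\vc v(b)$ forces $r\to\infty$ at both temporal ends, so the continuous function $r$ has a minimum.

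You instead use the transverse component and $h=rw$ to get the closed form $r=h/w$ with $w=GM/h+C\cos(\theta-\varphi)$. This locates the periapsis explicitly at $\theta\equiv\varphi \pmod{2\pi}$ with $r_{\min}=h/(GM/h+C)$, and needs no energy integral. The only real work is showing that the motion actually reaches $\theta^*$. Here you are more careful than the paper, which simply takes the motion to be defined for all $t\in(-\infty,\infty)$.

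You can also simplify that step. Since \eqref{eq:v-theta_temp} bounds $|\vc v|$, $r$ grows at most linearly in $t$. Together with $r\ge h/(GM/h+C)$, this makes the solution global in time. So your finite-time $r\to\infty$ branch is vacuous, and only the ``$\theta$ cannot stall'' argument, run in both time directions, is needed.

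The trade-off is structural. The formula $r=\dfrac{h^2/GM}{1+(hC/GM)\cos(\theta-\varphi)}$ is already the focal polar equation of a conic. It is precisely the Provost--Bracco computation of the paper's appendix, before the axes are aligned with the periapsis. So your proof of the lemma proves Kepler's first law outright and makes the velocity-projection-ratio argument superfluous. The paper's energy-plus-periodicity proof, by contrast, deliberately secures the periapsis without extracting any information about the shape of the orbit.
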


\begin{proof}
From~\eqref{eq:v-theta_temp}, it is easy to see that the speed $|\vc{v}(\theta)|$ is bounded above.
By the conservation of mechanical energy,
\[
\frac12|\vc v(\theta)|^2 - \frac{GM}{r(\theta)} = E,
\]
it follows that the radial distance $r(\theta)$ is bounded below by a positive number.

Since the polar angle $\theta$ is a monotone function of time $t$, its range
\[
I_\theta := \{\theta(t)\mid -\infty<t<\infty\}
\]
is an open connected interval.
Consequently, $I_\theta$ must be of one of the following forms:
\[
I_\theta = (-\infty,\infty),\quad (a,\infty),\quad (-\infty,b),\quad \text{or}\quad (a,b).
\]

Suppose first that $I_\theta$ is unbounded on at least one side.
Then $I_\theta$ contains a compact subinterval of length $2\pi$.
By~\eqref{eq:v-theta_temp}, the velocity $\vc v(\theta)$ is a $2\pi$-periodic
vector-valued function of~$\theta$, and hence the speed $|\vc v(\theta)|$
is also $2\pi$-periodic.
Therefore, $|\vc v(\theta)|$ attains a global maximum on any compact subinterval
of length~$2\pi$. By the conservation of mechanical energy, the radial distance $r(\theta)$ is minimized at those points where $|\vc v(\theta)|$ is maximized.
Thus, in this case the orbit admits a periapsis.

Now assume that $I_\theta=(a,b)$. Then $\displaystyle\lim_{t\to\infty}\theta(t)=b$ and, by the continuity of $\vc{v}(\theta)$, we see $\displaystyle\lim_{t\to\infty}\vc{v}(\theta)=\vc{v}(b)$.\\
If $\vc{v}(b)\neq\vc{0}$, the velocity approaches a nonzero constant vector, so the motion becomes asymptotically uniform, and hence
\[
\lim_{t\to\infty}r=\infty.
\]
If $\vc{v}(b)=\vc{0}$, then, since $\displaystyle h=|\vc{r}\times\vc{v}|\le r|\vc{v}|$, we have $\displaystyle r\ge\frac{h}{|\vc{v}|}\longrightarrow\infty$.

Thus, in either case,
\[
\lim_{t\to\infty}r=\infty.
\]
An analogous argument shows that $\displaystyle\lim_{t\to -\infty} r = \infty$. It follows that the radial distance $r$ diverges at both temporal ends and
therefore attains a global minimum at some finite time.
Hence, the orbit admits a periapsis.

Specifically, the cases $I_\theta=(a,\infty)$ and $I_\theta=(-\infty,b)$ cannot occur.
\end{proof}

After a suitable time shift and a suitable choice of coordinate axes, we may assume that the periapsis is located at
\[
\vc{r}(0) = (r_0,0).
\]
At the periapsis, the velocity $\vc{v}(0)$ is orthogonal to the position vector.
Thus we may write
\[
\vc{v}(0) = (0,v_0),
\]
and consequently
\[
h = |\vc{r}(0) \times \vc{v}(0)| = r_0 v_0.
\]
Accordingly, since $(0,1)=\vc{u}(\pi/2)$, equation~\eqref{eq:v-theta_temp} reduces to
\begin{equation}\label{eq:v-theta}
\vc{v}(\theta)
= \frac{r_0 v_0^{2}-GM}{r_0 v_0}\,\vc{u}(\pi/2)
+ \frac{GM}{r_0 v_0}\,\vc{u}(\theta+\pi/2).
\end{equation}

We now compare two projections of the velocity vector:
the \emph{radial projection} $\{\vc{v}(\theta)\cdot \vc{u}(\theta)\}\,\vc{u}(\theta)$, and 
the \emph{horizontal projection} $\{\vc{v}(\theta)\cdot \vc{u}(\pi)\}\,\vc{u}(\pi)$.\\
Since $\vc{u}(\pi/2)\cdot\vc{u}(\theta)=\sin\theta$ and $\vc{u}(\theta+\pi/2)\cdot\vc{u}(\theta)=0$,
\[
\vc{v}(\theta)\cdot\vc{u}(\theta)
= \frac{r_0 v_0^{2}-GM}{r_0 v_0}\sin\theta.
\]
Since $\vc{u}(\pi/2)\cdot \vc{u}(\pi)=0$ and $\vc{u}(\theta+\pi/2)\cdot \vc{u}(\pi)=\sin\theta$,
\[
\vc{v}(\theta)\cdot \vc{u}(\pi) 
=  \frac{GM}{r_0 v_0}\sin\theta.
\]

The \emph{ratio of signed projections} is therefore
\begin{equation}\label{eq:def-eccentricity}
\frac{\vc{v}(\theta)\cdot\vc{u}(\theta)}
     {\vc{v}(\theta)\cdot \vc{u}(\pi)}
 = \frac{r_0 v_0^{2} - GM}{GM}
 =: \epsilon.
\end{equation}
As is evident from the expression, this ratio $\epsilon$ is independent of~$\theta$.

Since $\displaystyle \epsilon = \frac{r_0{v_0}^2 - GM}{GM}$ and $h=r_0v_0$, equation \eqref{eq:v-theta} can be written as
\begin{equation}\label{eq:v-theta_epsilon}
\vc{v}(\theta)
= \frac{GM}{h} \bigl\{ \epsilon \,\vc{u}(\pi/2) + \vc{u}(\theta+\pi/2)\bigr\}.
\end{equation}
It follows that
\[
|\vc{v}(\theta)|^2
= \left(\frac{GM}{h}\right)^2
\left(\epsilon^2+1+2\epsilon\cos\theta\right).
\]
Since $\theta=0$ is the periapsis, $|\vc{v}(\theta)|^2$ must attain its maximum at $\theta=0$. This is possible only if 
\[
\epsilon\geq0.
\]

Note that the case $\epsilon=0$ corresponds to a constant speed. Together with conservation of angular momentum, this implies that the radial distance remains constant, $r(\theta)=r_0$, and hence the orbit is circular.

For the case $\epsilon > 0$, we now introduce a \emph{directrix} to bridge our kinematic result with classical geometry. Let the directrix be the line perpendicular to the periapsis line, located on the side opposite the central mass at a distance $\displaystyle r_0/\epsilon$ from the periapsis. By this construction, the ratio of the distances from the periapsis to the central mass ($r_0$) and to the directrix ($r_0/\epsilon$) is exactly $\epsilon:1$.

This setup allows us to recover the \emph{focus--directrix property of the orbit} directly from the velocity projections. Crucially, the radial projection of the velocity, $\vc{v}(\theta) \cdot \vc{u}(\theta)$, represents the rate of change of the distance $r$ to the central mass, $\dot{r}=\dd r/\dd t$. Simultaneously, the projection onto the fixed direction, $\vc{v}(\theta) \cdot \vc{u}(\pi)$, represents the rate of change of the distance $d$ to the chosen directrix, $\dot{d}=\dd d/\dd t$. 

Our analysis established that the ratio of these two projections is the constant $\epsilon$ throughout the entire motion:
\begin{equation*}
\dot{r} = \epsilon\dot{d}.
\end{equation*}
By construction of the directrix, the distance relation $r = \epsilon d$ is satisfied at the periapsis. Hence, the identity
\begin{equation*}
r = \epsilon d
\end{equation*}
holds throughout the motion. This is precisely the focus-directrix property of a conic section with focus at the origin and eccentricity $\epsilon$. Thus, the orbit is a conic section, completing the proof of Kepler’s first law.

\section{Appendix}
\subsection{Classical Structures Hidden in the Velocity Representation}
\newcommand{\Hodograph}[2]{
  \begin{tikzpicture}[scale=2.8]
    \def \gm {0.5} 
    \def \e {#1}  
    \def \rz {1}   
    
    \def \vz {sqrt((1 + \e) * \gm / \rz)} 
    
    \def \tz {#2 * 0.2}
    
    \draw[vechead] (-0.7, 0) -- (0.7, 0);
    \draw[vechead] (0, -0.2) node[anchor=north] {$(\epsilon=#1)$} -- (0, 1.2) coordinate (A);
    
    \draw[blue,very thick] plot [variable=\t,domain=-#2:#2,smooth,samples=20] 
    ({(\gm / (\rz * \vz) * (-sin(\t)))}, {(\gm / (\rz * \vz) * (\e + cos(\t)))});
    
    \foreach \i [evaluate=\i as \phi using #2*\i] in {-1,-0.8,...,1} {
        \draw[thin,vechead] (0, 0) -- ({(\gm / (\rz * \vz) * (-sin(\phi)))}, {(\gm / (\rz * \vz) * (\e + cos(\phi)))});
    }
    
    \filldraw[black] (0, {\gm / (\rz * \vz) * \e}) coordinate (C) circle (0.3pt) node[anchor=west] {$\frac{GM}{r_0v_0}(0,\epsilon)$};
    
    \draw[blue] (0, {\gm / (\rz * \vz) * \e}) -- ++($1.2 * \gm / (\rz * \vz)*({-sin(\tz)}, {cos(\tz)})$) coordinate (B);
    \draw[red,thick,vechead] (0, 0) -- ({(\gm / (\rz * \vz) * (-sin(\tz)))}, {(\gm / (\rz * \vz) * (\e + cos(\tz)))}) node[anchor=east] {$\vc{v}(\theta)$};
    
    \draw pic [red,draw,angle radius=3mm] {angle = A--C--B};
    \draw (C) node[red,anchor=south east,xshift=2,yshift=6] {$\theta$};
    
    \draw[thick,blue] (0, {\gm / (\rz * \vz) * \e}) to [out=335,in=25,distance=0.5mm] node[anchor=midway,right] {$(\epsilon)$} (0, 0);
    \draw[thick,blue] (0, {\gm / (\rz * \vz) * \e}) to [out=25,in=335,distance=0.7mm] node[anchor=midway,right] {$(1)$} (0, {\gm / (\rz * \vz) * (\e + 1)});
  \end{tikzpicture}
}

In this appendix, we demonstrate that the explicit algebraic form \eqref{eq:v-theta} of the velocity vector $\mathbf{v}$ naturally encompasses the geometry of the velocity hodograph and the Laplace-Runge-Lenz vector.

\subsubsection{Circularity of the Hodograph}
The \emph{hodograph} of a motion is defined as the curve traced by the tip of the velocity vector $\vc{v}(\theta)$ in velocity space as the parameter $\theta$ varies.
Hamilton famously showed that the hodograph of Keplerian motion is a circle (or an arc of a circle). The expression~\eqref{eq:v-theta_epsilon} makes clear that the hodograph is a circle of radius 
$\displaystyle \frac{GM}{h}$, centered at 
$\displaystyle \left(0,\frac{GM\epsilon}{h}\right)$ in velocity space; 
nothing further needs to be explained.

\begin{figure}[h!]
    \Hodograph{0.75}{180}
    \Hodograph{1}{180}
    \Hodograph{1.5}{acos(-1/\e)}
    \caption{Hodographs for the cases $\epsilon=0.75$, $\epsilon=1$, and $\epsilon=1.5$.}
\end{figure}

\subsubsection{Constancy of the Laplace--Runge--Lenz vector}

Let the mass of the planet be $m$. The \emph{Laplace--Runge--Lenz (LRL) vector} is defined by
\[
\vc{A} := \vc{p}\times\vc{L} - mk\hat{r},
\]
where
\[
k = GMm,\qquad \vc{p} = m\vc{v},\qquad \vc{L} = \vc{r}\times\vc{p}
\]
denote the gravitational parameter, the linear momentum, and the angular momentum, respectively. This vector is a famous constant of motion.
Since $\hat{r}=\vc{u}(\theta)$, rewriting this expression in terms of the velocity, we obtain
\[
\vc{A}
= m^2\bigl\{\vc{v}\times(\vc{r}\times\vc{v}) - GM\vc{u}\bigr\}.
\]

Here we give an alternative and simple verification of its constancy based on the explicit velocity representation~\eqref{eq:v-theta}. Since the overall factor $m^2$ is irrelevant for constancy, it suffices to show that
\[
\vc{v}\times(\vc{r}\times\vc{v}) - GM\vc{u}
\]
is independent of $\theta$.
First, by conservation of angular momentum,
\[
\vc{r}\times\vc{v} = h \,\hat{z},
\]
where $\hat{z}$ is the unit vector normal to the orbital plane.
Hence
\[
\vc{v}\times(\vc{r}\times\vc{v})
= h\, \vc{v}\times\hat{z}.
\]

Observing that, within the orbital plane, the cross product with $\hat{z}$ corresponds to a rotation by $-\pi/2$, we may write
\[
\vc{v}\times\hat{z} = R_{-\pi/2}(\vc{v}).
\]
Therefore, using the explicit expression \eqref{eq:v-theta},
\[
\begin{aligned}
h\, \vc{v}\times\hat{z}
&= h\, R_{-\pi/2}(\vc{v}) \\
&= h\, R_{-\pi/2}\!\left(\frac{GM}{h} \bigl\{ \epsilon \,\vc{u}(\pi/2) + \vc{u}(\theta+\pi/2)\bigr\}\right) \\
&= GM\bigl\{\epsilon \,\vc{u}(0) + \vc{u}(\theta)\bigr\}.
\end{aligned}
\]
Subtracting the term $GM\vc{u}$, we obtain
\[
\vc{v}\times(\vc{r}\times\vc{v}) - GM\vc{u}
= GM\epsilon \,\vc{u}(0)
= GM(\epsilon,\,0),
\]
which is manifestly constant.
This proves the constancy of the Laplace--Runge--Lenz vector.

\subsection{Provost and Bracco} 
Provost and Bracco~\cite{Provost2009} already offered an elegant perspective, revealing that the geometry of Keplerian motion is deeply encoded in velocity projections. By highlighting the geometric meaning of the transverse velocity component, their approach provides an insightful connection between angular momentum conservation and orbital geometry. To make these insights more transparent and seamlessly connect them to our present framework, let us humbly summarize their work in our own terms. 

They focus on the transverse projection of the velocity vector, 
\[ \bigl(\vc{v}\cdot\hat{\theta}\bigr)\hat{\theta}, \]
where, in our notation, $ \hat{\theta}=\vc{u}(\theta+\pi/2)$.  Its connection with the areal velocity is immediate. Indeed, conservation of angular momentum gives 
\[ 
r\bigl(\vc{v}\cdot\hat{\theta}\bigr)=h, 
\] 
and hence 
\begin{equation}\label{eq:r_l/(inner prod)}
r=\frac{h}{\vc{v}\cdot\hat{\theta}}\,. 
\end{equation}
Using equation~\eqref{eq:v-theta_epsilon}, we can evaluate $\vc{v}\cdot\hat{\theta}$ directly: 
\[ 
\vc{v}\cdot\hat{\theta} = \frac{GM}{h} \left(1+\epsilon\cos\theta\right).
\] 
Combining this with equation~\eqref{eq:r_l/(inner prod)}, we obtain 
\[ 
r = \frac{h^2/GM} {1+\epsilon\cos\theta}. 
\]
Thus, the transverse velocity projection identified by Provost and Bracco leads directly to the familiar polar equation of a conic. This completes the reconstruction of their argument in our notation.

\section*{Epilogue}
\begin{quote}
\small
\textbf{Author's Note:} \textit{This epilogue is of an interpretative and historical nature, offered solely as a personal reflection on a lifelong quest, and does not constitute part of the formal mathematical proof.}
\end{quote}

After years of relentless computation, Kepler declared his first law: that the orbit of Mars is an ellipse with the Sun at one focus. Working in the early seventeenth century, armed only with observational data and relentless numerical computation, Kepler asserted this law not as a theorem but as a hard-won conviction. Decades later, Isaac Newton, inheriting Kepler’s profound insight, succeeded in providing a mathematical proof. His argument was profoundly geometric and undeniably intricate. Subsequent generations offered alternative perspectives: Johann Bernoulli through differential equations, William Rowan Hamilton through the elegant hodograph method, and modern physics through the constancy of the Laplace--Runge--Lenz vector.

At this point, one may ask a natural historical question: who contributed most decisively to our understanding of planetary motion? Many would answer Newton. Few would dispute the grandeur of his achievement. His unification of the Earth and the heavens remains one of the greatest intellectual triumphs in human history, deserving timeless admiration.

Yet Kepler's accomplishment possesses a different kind of greatness. Newton supplied the proof; Kepler supplied the vision. Working in an era when neither modern mechanics nor the necessary mathematical language yet existed, Kepler proclaimed a law that no one could have anticipated from observation alone. Newton's mathematical success was possible because Kepler had already recognized the hidden geometric order of planetary motion.

To celebrate Newton's triumph need not diminish the originality of Kepler's insight. If Newton transformed a profound conjecture into mathematical certainty, Kepler first created that conjecture through extraordinary scientific imagination. In this sense, Kepler was not merely a precursor to Newton but the indispensable architect of the modern understanding of planetary motion.

Ultimately, Gauss could determine the orbit of Ceres because he knew Kepler's laws; Kepler determined the orbit of Mars even though he did not know Kepler's laws.

This historical reflection naturally leads to a deeper philosophical question:
Why should planetary orbits be conic sections at all?
Long, long before Kepler, Greek geometers such as Euclid and Apollonius studied curves obtained by slicing cones with planes, apparently for reasons internal to geometry with no practical motivation. That these same curves should later emerge as the laws governing the heavens is nothing short of remarkable. Is this coincidence to be attributed to divine intervention or teleological design?

In this article, we have presented a proof of Kepler’s first law that is distinguished by its brevity and geometric transparency. By working directly with intrinsic kinematic quantities, our argument makes clear that the appearance of conic sections is not the result of external design, but a geometric necessity. There is no hidden intention in the shape of an orbit. If any command was ever issued to the heavens, it was not an act of design, but a single kinematic mandate imposed by the inverse–square law:
\begin{center}
    \textit{``Maintain the projection ratio.''}
\end{center}
Following this single kinematic mandate inevitably carves a conic section out of the void. The beauty of planetary paths emerges not from a pre-ordained blueprint but as an inevitable consequence of the deep structural harmony between dynamics and geometry.

\section*{Conflict of Interest}
The author declares that there is no conflict of interest regarding the publication of this paper.


\end{document}